\newtheorem{theorem}{Theorem}[section]
\newtheorem{proposition}[theorem]{Proposition}
\newtheorem{lemma}[theorem]{Lemma}
\numberwithin{equation}{section}
\newcommand{\E}{\mathbb{E}}
\renewcommand{\P}{\mathbb{P}}
\renewcommand{\S}{\mathbb{S}}
\newcommand{\J}{\mathbb{J}}
\newcommand{\I}{\mathbb{I}}
\newcommand{\K}{\mathbb{K}}
\newcommand{\C}{\mathbb{C}}
\newcommand{\SC}{\mathcal{C}}
\newcommand{\SF}{\mathcal{F}}
\newcommand{\SG}{\mathcal{G}}
\newcommand{\SL}{\mathcal{L}}
\newcommand{\SM}{\mathcal{M}}
\newcommand{\SN}{\mathcal{N}}
\newcommand{\SO}{\mathcal{O}}
\newcommand{\SP}{\mathcal{P}}
\begin{document}

\baselineskip=15.5pt

\title[Poisson surfaces and algebraically completely integrable systems]{Poisson
surfaces and algebraically completely integrable systems}

\author[I. Biswas]{Indranil Biswas}

\address{School of Mathematics, Tata Institute of Fundamental Research,
Homi Bhabha Road, Bombay 400005, India}

\email{indranil@math.tifr.res.in}

\author[J. Hurtubise]{Jacques Hurtubise}

\address{Department of Mathematics, McGill University, Burnside
Hall, 805 Sherbrooke St. W., Montreal, Que. H3A 0B9, Canada}

\email{jacques.hurtubise@mcgill.ca}

\subjclass[2000]{14H60, 53D30, 65P10}

\keywords{Integrable system, Poisson surface, $L$-connection-valued Higgs bundle, spectral curve}

\date{}

\begin{abstract} One can associate to many of the well known algebraically integrable systems of Jacobians (generalized Hitchin systems, Sklyanin) a ruled surface which encodes much of its geometry. If one looks at the classification of such surfaces, there is one case of a ruled surface that does not seem to be
covered. This is the case of projective bundle associated to the first jet bundle of a topologically nontrivial line bundle.
We give the integrable system corresponding to this surface; it turns out to be
a deformation of the Hitchin system.
\end{abstract}

\maketitle

\section{Introduction}

The geometry of algebraically integrable Hamiltonian systems, at least when the level 
sets of the Hamiltonians are Jacobians of curves, is intimately tied to the geometry 
of Poisson surfaces. Indeed, the well studied integrable systems of this type seem to 
come with a canonically associated Poisson surface, which encodes much of their 
geometry.

The canonical examples of this are the generalized Hitchin systems, for ${\rm GL}(n,\C)$. One fixes a compact Riemann surface $X$, and a positive divisor $D$ on $X$.
Let $K_X$ be the holomorphic cotangent bundle
of $X$. The phase space for the generalized Hitchin system consists of equivalence classes of pairs $(E,\phi)$ where $E$ is a rank $n$ holomorphic
vector bundle on $X$, and $\phi $ is a holomorphic
section of $End(E)\otimes K_X(D)$. The Hamiltonians are given by the coefficients of the spectral curve $S$ cut out in the total space $\K(D)$ of $K_X\otimes\SO(D)$ by
the equation ${\rm Det}(\phi-\eta\I)\,= \,0$, where $\eta $ is the tautological section of the pullback of $K_X\otimes\SO(D)$ to $\K(D)$. Fixing the Hamiltonians, one can consider the cokernel $\SF$ defined by 
$$
0\,\longrightarrow\, E\otimes K_X^\vee(-D)\, {\buildrel{\phi-\eta\I}\over{\longrightarrow}}
\, E\,\longrightarrow\, \SF\,\longrightarrow\, 0\, .
$$
When one is in a generic situation, say when $S$ is reduced smooth, the above cokernel $\SF$ is a line bundle
on $S$. Line bundles, of course, are parametrized by Jacobians, and one gets an integrable system of Jacobians, fibering over a base corresponding to a family of spectral curves in the surface $\K(D)$. When one specializes $X$ and the divisor $D$, this gives many of the classically studied integrable systems of Jacobians.

We compactify $\K(D)$ to $\overline{\K}(D)\, :=\,\P(\SO\oplus K(D))$, and note that this has a Poisson structure (a holomorphic section of $\bigwedge^2 T\overline{\K}(D)$) which vanishes along $D$, and vanishes to order two along the compactifying divisor
$\P(1,0)\,\subset\, \overline{\K}(D)$. This Poisson structure
encodes much of Poisson geometry of the Hitchin system. Indeed, normalizing the line bundle $\SF$ (tensoring by a fixed line bundle so that the result is of degree $g$, and so generically has a single non-zero section), the divisor $\sum_i(x_i,\eta_i)$ of that section provides Darboux coordinates which in fact mediate the separation of variables for the system in terms of Abelian integrals. (See \cite{AHH, Hu}.) More geometrically, this procedure defines a Poisson isomorphism between an open set of the Higgs moduli and an open set of the Hilbert scheme of points of the surface $\K(D)$.

When the genus of $X$ is $0$ or $1$, one also has the Sklyanin system, for which the 
phase space consists of pairs $(E\, ,\phi)$, where $E$ is a vector bundle as before, and 
$\phi$ is now a meromorphic automorphism of $E$; one again has a spectral curve, and 
this time the surface is $X\times \P^1$, but now with a Poisson structure which 
vanishes along two sections $X\,\longrightarrow \,X\times \P^1$ given by $0\, ,\infty 
\,\in\, \P^1$. One has Darboux coordinates as before \cite{HuMa}.

There is a general version of this picture, examined in \cite{Hu}, showing that in 
some sense the surface systems are the ones of minimal complexity. Indeed, given an 
integrable system $\J\,\longrightarrow\, B$ of Jacobians, with an associated family 
$\S\,\longrightarrow\, B$ of curves, one can take an Abel-Jacobi map $A\,:\,\S
\,\longrightarrow\,
\J$, and pull back the symplectic form $\Omega$. If this pullback $A^*\Omega$ is of 
minimal rank, one gets a symplectic surface by quotienting out the null foliation. The 
curves embed in this surface, and the surface again provides separating coordinates 
for the system. A parabolic bundle analog is studied in \cite{BGL}.

One is therefore interested in the Poisson surfaces $P$, with a view to seeing what integrable systems might correspond to it. Considering Kodaira dimension, and taking into account the fact that the Poisson tensor behaves well under blowing down, one reduces rapidly to the cases of $P$ an Abelian variety or a K3 surface, for which the surface is symplectic (we leave these cases aside), or $P$ a ruled surface. What is perhaps more surprising is that the set of possibilities for the latter is somewhat restricted. The classification was done by Bartocci and Macr\'i \cite{BaM}; rephrasing their result somewhat:

\begin{proposition} Let $P$ be a ruled surface $\P(V)$ with Poisson structure over a
Riemann surface $X$ of genus $g$, and let $\overline \beta\,:\,P\,\longrightarrow
\,X$ be the projection. Then one of the following two holds:
\begin{enumerate} 
\item The vector bundle $V$ on $X$ is a sum of line bundles, which one can normalize to
$L \oplus {\mathcal O} $ with the degree of $L$ positive or zero. 
\begin{itemize}
\item For $g\,>\,1$, or for ($g\,=\,1$ and $L$ of positive degree), $L\,=\,K_X(D)$, with
$D$ a positive or zero divisor. The divisors of the possible Poisson structures are of
the form $2E+ \pi^*(D')$, where $E$ is the divisor given by the projectivisation of the
image of $K_X(D)$ in $V$, and $D'$ is an effective divisor on $X$ linearly equivalent to $D$. 

\item For $g\,=\,1$, and the degree of $L$ zero, one can also get as divisor of the Poisson
structure the divisor $E+E'$, where $E$ and $E'$ are the divisors on $\P(V)$ corresponding
to the inclusions of $L$ and ${\SO}$ into their sum. 

\item For $g\,=\,0$, both possibilities for the divisor of the Poisson tensor can
occur: either the divisor is $2E+ \pi^*(D')$ or it is $E+E' + \pi^*(D'')$.
\end{itemize}

\item When $g\,\geq\, 1$ the bundle $V$ could also be the non-trivial extension
$$0\,\longrightarrow\, K_X\,\longrightarrow \,V\,\longrightarrow\,
\SO\,\longrightarrow\, 0\, .
$$
The vector bundle $V$ can be taken to be the tensor product $J^1(L)\otimes L^*$ of the 
one-jet bundle of a line bundle of non-zero degree, with the dual of that line bundle. 
The extension above is simply the one-jet sequence
$$0\,\longrightarrow\, K_X\otimes L\,\longrightarrow \,J^1(L)\,\longrightarrow\, 
L\,=\, J^0(L)\,\longrightarrow\, 0
$$
of $L$, tensored with the dual of 
$L$. In this case the divisor of the Poisson tensor is the divisor $2E$, where $E$ is 
given by the projectivisation of the inclusion $K_X\,\longrightarrow\, V$.
\end{enumerate}
\end{proposition}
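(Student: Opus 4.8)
The plan is to translate the problem into spectral data. Since $P$ has complex dimension two, the Jacobi identity is automatic and a Poisson structure is nothing but a holomorphic section of $\bigwedge^2 TP\cong K_P^{-1}$, its vanishing locus being the divisor of the Poisson tensor, an effective anticanonical divisor. Writing $\overline\beta\colon P=\P(V)\to X$ for the ruling, the relative Euler sequence gives $K_P^{-1}\cong\mathcal{O}_{\P(V)}(2)\otimes\overline\beta^{*}\bigl(K_X^{-1}\otimes(\det V)^{-1}\bigr)$; pushing this down and using the rank-two identity $\mathrm{Sym}^2 V\otimes(\det V)^{-1}\cong \mathrm{End}_0(V)$ yields
\[
\overline\beta_{*}K_P^{-1}\;\cong\;\mathrm{End}_0(V)\otimes K_X^{-1}.
\]
Thus a Poisson structure on $\P(V)$ is exactly a trace-free Higgs-type field $\Phi\colon V\to V\otimes K_X^{-1}$, and the classification reduces to deciding, for each $V$, when $H^{0}\!\bigl(X,\mathrm{End}_0(V)\otimes K_X^{-1}\bigr)\neq 0$ and what the associated divisor looks like.

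First I would identify the divisor geometrically. Over $x\in X$ the section of $K_P^{-1}$ attached to $\Phi$ vanishes precisely at the $\Phi_x$-invariant lines in $V_x$, so its divisor is the spectral bisection cut out by the characteristic equation $\eta^{2}+\det\Phi=0$, where $\det\Phi\in H^{0}(X,K_X^{-2})$ and $\eta$ is the tautological eigenvalue (a section of $\overline\beta^{*}K_X^{-1}$). The shape of this divisor is governed entirely by $\det\Phi$. If $\det\Phi\equiv 0$ and $\Phi\neq 0$, then $\Phi$ is nilpotent, the two eigen-lines coalesce into a single section $E=\P(\ker\Phi)$, and the divisor is $2E+\overline\beta^{*}D'$, the vertical part $D'=\mathrm{div}(\Phi)$ recording where $\Phi$ drops rank; if instead $-\det\Phi$ is the square of a section of $K_X^{-1}$, the bisection splits as $E+E'+\overline\beta^{*}D''$. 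The genus now enters through $\deg K_X^{-2}=4-4g$: for $g>1$ one has $H^{0}(K_X^{-2})=0$, forcing $\det\Phi=0$, so only the nilpotent type $2E+\overline\beta^{*}D'$ can occur; for $g=1$, $K_X=\SO$ and $\det\Phi$ is constant, so a non-zero value makes $\Phi$ semisimple with two constant eigenvalues and produces the divisor $E+E'$; and for $g=0$ all of $H^{0}(K_X^{-2})$ is available and both reducible configurations arise, in general with a vertical part $\overline\beta^{*}D''$.

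Next I would pin down $V$. When $V$ is decomposable, a harmless twist that does not alter $\P(V)$ normalizes it to $V=L\oplus\SO$ with $\deg L\geq 0$, and then
\[
\mathrm{End}_0(V)\otimes K_X^{-1}\;\cong\;K_X^{-1}\,\oplus\,\bigl(L\otimes K_X^{-1}\bigr)\,\oplus\,\bigl(L^{-1}\otimes K_X^{-1}\bigr).
\]
For $g\geq 1$ with $\deg L\geq 0$ the last summand has negative degree and contributes nothing, while the middle summand carries the nilpotent off-diagonal fields: a non-zero such field exhibits $L\otimes K_X^{-1}$ as effective, i.e. $L=K_X(D)$ with $D=\mathrm{div}(\Phi)\geq 0$ and $D'\in|D|$, giving exactly the first listed family with $E=\P\bigl(K_X(D)\bigr)$. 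The diagonal summand $K_X^{-1}$ contributes only for $g\leq 1$; for $g=1$ it provides the semisimple field whose spectral divisor is $E+E'=\P(L)+\P(\SO)$, and when $\deg L=0$ with $L$ non-trivial this is the sole section, so $E+E'$ is the only Poisson divisor --- precisely the second sub-case. For $g=0$ every summand is effective and one reads off both $2E+\overline\beta^{*}D'$ and $E+E'+\overline\beta^{*}D''$.

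What remains, and what I expect to be the main obstacle, is the indecomposable case. Here a non-zero value of $\det\Phi$ would decompose $V$ into eigen-sub-bundles, contradicting indecomposability, so $\Phi$ is nilpotent; the key estimate is then that the kernel line $\ker\Phi\subset V$ has degree at least $\tfrac12\deg V+(g-1)$, forcing $V$ to be maximally unstable. After the normalizing twist $V$ must therefore sit in
\[
0\longrightarrow K_X\longrightarrow V\longrightarrow\SO\longrightarrow 0,
\]
with $K_X$ the eigen-line of $\Phi$, whence the divisor is $2E$ with $E=\P(K_X)$ and no vertical part. Two points need care. First, one must check that non-splitness of this extension is exactly what kills the freedom in $D'$, forcing $\mathrm{div}(\Phi)=0$. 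Second, one must identify the extension: since $\mathrm{Ext}^{1}(\SO,K_X)\cong H^{1}(X,K_X)\cong\C$ there is a single non-trivial class up to scale, and it is the Atiyah class of $L$, so $V\cong J^{1}(L)\otimes L^{*}$ and the sequence is the one-jet sequence of $L$ twisted by $L^{*}$; the class is non-zero, hence the extension non-split, precisely when $\deg L\neq 0$, which confines this case to $g\geq 1$ (on $\P^1$ every bundle splits) and to line bundles of non-zero degree. Assembling the decomposable and indecomposable analyses across the three genus ranges then yields the stated list.
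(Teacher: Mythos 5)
First, a point of comparison: the paper does not prove this proposition at all --- it is quoted (``rephrasing their result somewhat'') from Bartocci--Macr\'i \cite{BaM}. So your argument cannot be matched against an in-paper proof; it can only be judged on its own terms. On those terms, your framework is the right one and is the natural modern route: in dimension two the Jacobi identity is vacuous, so a Poisson structure is a section of $K_P^{-1}$; the pushdown identity $\overline\beta_{*}K_P^{-1}\cong \mathrm{End}_0(V)\otimes K_X^{-1}$ converts the classification into the study of trace-free ``co-Higgs'' fields $\Phi\colon V\to V\otimes K_X^{-1}$; and the genus trichotomy is then correctly driven by $\det\Phi\in H^0(X,K_X^{-2})$, whose degree $4-4g$ forces nilpotency for $g>1$, gives constant eigenvalues for $g=1$, and imposes no constraint for $g=0$. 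Your decomposable analysis via the splitting $\mathrm{End}_0(L\oplus\mathcal{O})\otimes K_X^{-1}\cong K_X^{-1}\oplus LK_X^{-1}\oplus L^{-1}K_X^{-1}$ is essentially complete (indeed, your observation that the diagonal summand yields $E+E'$ for \emph{any} $g=1$ case, not only $\deg L=0$, is if anything more careful than the proposition's own phrasing; note also the small slip that for $g=1$, $\deg L=0$ the summand $L^{-1}K_X^{-1}$ has degree zero, not negative, and contributes exactly when $L\cong\mathcal{O}$).

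The genuine gap is the step you yourself flag in the indecomposable case and then leave unproved: why the effective divisor $D'$ with $\ker\Phi\otimes(V/\ker\Phi)^{-1}\cong K_X(D')$ must vanish. You say ``one must check that non-splitness of this extension is exactly what kills the freedom in $D'$,'' but this is precisely the assertion that needs an argument, and without it the conclusion that $V$ is \emph{the} nontrivial extension of $\mathcal{O}$ by $K_X$ (rather than by some $K_X(D')$) does not follow. The fix is short and uses exactly the computation you already perform for the Atiyah class: after the normalizing twist, $V$ is an extension $0\to K_X(D')\to V\to\mathcal{O}\to 0$, and by Serre duality $\mathrm{Ext}^1(\mathcal{O},K_X(D'))\cong H^1(X,K_X(D'))\cong H^0(X,\mathcal{O}_X(-D'))^{*}=0$ whenever $D'>0$ is effective and nonzero; so for $D'\neq 0$ every such extension splits, contradicting indecomposability. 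Hence $D'=0$, the extension class lies in $H^1(X,K_X)\cong\mathbb{C}$ and must be nonzero, and conversely this bundle does carry a Poisson structure, namely the nowhere-vanishing nilpotent composite $V\to V/K_X=\mathcal{O}\cong K_X\otimes K_X^{-1}\hookrightarrow V\otimes K_X^{-1}$, whose divisor is exactly $2E$ with no vertical part. With that paragraph inserted, your sketch becomes a complete proof.
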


The line bundle cases are essentially covered by the Hitchin or Sklyanin systems. One 
is then left with the question of trying to understand what, if anything, the surface 
corresponding to the non-trivial extension corresponds to, and this is the subject of 
this paper. We will find that a geometry very similar to that of the Hitchin systems 
holds; instead of Higgs fields $\phi$ which are endomorphisms taking values in the 
one-forms, we find ``shifted'' Higgs fields $\psi$ taking values in the connections on 
a fixed line bundle $L$. We will explore some of the properties of these 
$L$-connection-valued Higgs bundles.

The shifted moduli space mimics many of the aspects of the Hitchin moduli of Higgs 
bundles; it supports an integrable Hamiltonian system, for example, with spectral 
curves, and so on. The question arises as to whether it shares the more gauge 
theoretical properties associated to the Higgs bundles, for example a hyperK\"ahler 
structure in the parabolic case, or other complex structures tying one to some form of 
representation of the fundamental group. It is a question which we leave for another 
time, to focus more on the complex geometry. We note also that our 
$L$-connection-valued Higgs bundles can be defined for other structure groups, not 
only ${\rm GL}(n,{\mathbb C})$, though for these one no longer has a surface as the 
relevant geometric object (see \cite{HuMa2}).
 
\section{Jet bundles and connections}

Let $X$ be a compact connected Riemann surface. As before, the holomorphic cotangent bundle
of $X$ will be denoted by $K_X$. Fix a holomorphic line bundle
$L$ on $X$. Consider the short exact sequence of jet bundles
$$
0\,\longrightarrow\, K_X\otimes L\,\longrightarrow\, J^1(L)
\,\longrightarrow\, J^0(L) \,=\, L\,\longrightarrow\, 0
$$
associated to $L$. Tensoring it $L^*$, we get an exact sequence
\begin{equation}\label{e1}
0\,\longrightarrow\, K_X\,\longrightarrow\, {\mathcal V}_L\, :=\, J^1(L)
\otimes L^* \,\stackrel{\phi}{\longrightarrow}\, {\mathcal O}_X\,\longrightarrow\, 0\, .
\end{equation}
The above vector bundle ${\mathcal V}_L$ is the dual of the Atiyah bundle
$\text{At}(L)$ for $L$. Let $1_X$ be the section of ${\mathcal O}_X$ given by the
constant function $1$. Define the fiber bundle over $X$
\begin{equation}\label{e2}
{\mathcal C}_L\, :=\, \phi^{-1}(1_X(X))\,\subset\, {\mathcal V}_L\, .
\end{equation}
Holomorphic sections of ${\mathcal C}_L$ over an open subset $U$ of $X$ are
 the holomorphic connections on $L\vert_U$. From
\eqref{e1} it follows immediately that ${\mathcal C}_L$ is a torsor over $X$ for
$K_X$.

\begin{lemma}\label{lem1}
If ${\rm degree}(L)\,=\, 0$, then the fiber bundle ${\mathcal C}_L$ over $X$ is
holomorphically isomorphic to $K_X$.

If ${\rm degree}(L)\,\not=\, 0$, then ${\mathcal C}_L$ does not admit any
compact complex analytic subset of positive dimension.
\end{lemma}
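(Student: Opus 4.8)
The plan is to deduce both assertions from the classical theorem of Atiyah and Weil, that a holomorphic line bundle on a compact Riemann surface admits a holomorphic connection if and only if its degree is zero. The link is provided by \eqref{e1}: a global holomorphic section of the $K_X$-torsor $\mathcal{C}_L$ is the same thing as a holomorphic splitting $\mathcal{O}_X \to \mathcal{V}_L$ of $\phi$, hence the same as a global holomorphic connection on $L$ (as already recorded after \eqref{e2}). Equivalently, the torsor $\mathcal{C}_L$ is trivial precisely when \eqref{e1} splits, and this happens if and only if $\deg L = 0$.

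For the first assertion, assume $\deg L = 0$. Then by the above there is a global holomorphic section $s_0$ of $\mathcal{C}_L$, and the torsor structure gives a fiber-preserving biholomorphism $K_X \to \mathcal{C}_L$, $v \mapsto s_0(\overline\beta(v)) + v$, where $\overline\beta$ denotes the projection to $X$. This is the desired isomorphism $\mathcal{C}_L \cong K_X$ over $X$.

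For the second assertion, assume $\deg L \neq 0$ and suppose, for contradiction, that $Z \subset \mathcal{C}_L$ is a compact complex analytic subset of positive dimension. Since $\mathcal{C}_L$ is a non-compact connected complex surface, it contains no compact analytic subset of dimension two, so some irreducible component $C$ of $Z$ is a compact curve. The fibers of the projection are affine lines, biholomorphic to $\mathbb{C}$, and therefore contain no compact curves; hence $C$ is not contained in a fiber, and after normalizing it we obtain a branched covering $p : \tilde C \to X$ of some degree $d \geq 1$, together with the tautological map $c : \tilde C \to \mathcal{C}_L$. I would then form the fiberwise barycenter of $C$, namely the section $x \mapsto s(x) := \frac{1}{d}\sum_{y \in p^{-1}(x)} c(y)$, the average being taken in the affine fiber $(\mathcal{C}_L)_x$. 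Because barycenters in an affine space are intrinsic and commute with the affine translations that serve as transition maps of the $K_X$-torsor, $s$ is a well-defined global holomorphic section of $\mathcal{C}_L$, that is, a global holomorphic connection on $L$. This contradicts Atiyah--Weil, since $\deg L \neq 0$, completing the proof.

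The routine points are the identification of sections of $\mathcal{C}_L$ with connections and the splitting criterion. The one step needing care is the barycenter construction: one must check that $s$ is holomorphic across the branch points of $p$ and independent of all choices. Choosing a local holomorphic section $\sigma_0$ of $\mathcal{C}_L$ over $U \subset X$ and writing $s = \sigma_0 + \frac{1}{d}\, p_*(c - \sigma_0\circ p)$ over $p^{-1}(U)$ reduces this to the standard fact that the trace (fiberwise sum) of a holomorphic section of $p^* K_X$ is a holomorphic section of $K_X$, locally bounded and hence extending holomorphically over the finitely many branch points; the displayed expression is manifestly independent of $\sigma_0$, since a change $\sigma_0 \mapsto \sigma_0 + u$ alters the two terms by $+u$ and $-u$. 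This is where I expect the only genuine work to lie.
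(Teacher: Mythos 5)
Your proof is correct, and for the second (nontrivial) assertion it takes a genuinely different route from the paper. The paper's argument constructs a \emph{tautological holomorphic connection} ${\mathcal D}_L$ on $\beta^*L$ over ${\mathcal C}_L$ (from the tautological section of $\beta^*(J^1(L)\otimes L^*)$), and then, given a nonconstant map $\xi\colon Y\to {\mathcal C}_L$ from a compact curve, pulls ${\mathcal D}_L$ back to get a holomorphic connection on $(\beta\circ\xi)^*L$; the Atiyah--Weil criterion forces $\deg(\beta\circ\xi)\cdot\deg(L)=0$, contradicting $\deg(\beta\circ\xi)>0$. You instead exploit the affine structure of the $K_X$-torsor: the fiberwise barycenter of a compact multisection, made rigorous via the trace map $p_*$ for the branched covering $p\colon \tilde C\to X$ and Riemann extension across branch points, produces a global holomorphic section of ${\mathcal C}_L$, i.e.\ a holomorphic connection on $L$ itself, contradicting Atiyah--Weil directly. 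Your barycenter argument is more elementary and self-contained (it is really the general principle that an affine bundle over a curve admitting a multisection admits a section in characteristic zero), whereas the paper's construction of ${\mathcal D}_L$ does double duty: its curvature is precisely the symplectic form $\Omega$ used throughout the rest of the paper, so the paper gets the key geometric object and the lemma from the same construction. Both proofs handle the preliminary reductions (no compact surfaces, no curves in fibers, surjectivity onto $X$) in essentially the same way, and your first assertion is proved exactly as in the paper.
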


\begin{proof} 
First assume that ${\rm degree}(L)\,=\, 0$. Then $L$ admits a holomorphic connection.
In fact, $L$ has a unique flat holomorphic connection whose monodromy lies in ${\rm U}(1)$.
Therefore, ${\mathcal C}_L$ admits a holomorphic section. So the $K_X$--torsor
${\mathcal C}_L$ is trivial; in particular, ${\mathcal C}_L$ is holomorphically
isomorphic to $K_X$.

Now let us consider the case of non-zero degree. Let
\begin{equation}\label{e3}
\beta\, :\, {\mathcal C}_L\,\longrightarrow\, X
\end{equation}
be the projection. Using the pullback operation, we have a natural injective homomorphism
$$
\beta^*J^1(L)\, \longrightarrow\, J^1(\beta^* L)\, .
$$
Tensoring this with $\text{Id}_{\beta^{^*} L^{^*}}$, the following homomorphism
$$
\alpha\, :\, \beta^*(J^1(L)\otimes L^*) \, \longrightarrow\, J^1(\beta^* L)
\otimes (\beta^* L)^*
$$
is obtained. The vector bundle $\beta^*(J^1(L)\otimes L^*)$ over ${\mathcal C}_L$ has
a tautological section. This tautological section will be denoted by $s$. Now the section
$$
\alpha\circ s \, :\, {\mathcal C}_L\,\longrightarrow\,J^1(\beta^* L)
\otimes (\beta^* L)^*
$$
defines a holomorphic connection on the line bundle $\beta^* L$. Let
\begin{equation}\label{f1}
{\mathcal D}_L
\end{equation}
denote this tautological holomorphic connection on $\beta^* L$.

Let
$$
\xi\, :\, Y\, \longrightarrow\, {\mathcal C}_L
$$
be a nonconstant holomorphic map from a compact connected Riemann surface $Y$.
Then the pullback $\xi^*{\mathcal D}_L$ is a holomorphic connection on the line bundle
$\xi^*\beta^* L\,=\, (\beta\circ\xi)^*L$. This implies that
\begin{equation}\label{e4}
\text{degree}(\beta\circ\xi)\cdot \text{degree}(L)\,=\, 
\text{degree}((\beta\circ\xi)^*L)\,=\, 0\, .
\end{equation}

Since the fibers of $\beta$ are affine spaces, $Y$ is not contained in
some fiber of $\beta$. Therefore, $\text{degree}(\beta\circ\xi)\, >\, 0$.
But this contradicts \eqref{e4}. Therefore, we
conclude that ${\mathcal C}_L$ does not admit any compact
complex analytic subset of positive dimension if $\text{degree}(L)\,\not=\,0$.
\end{proof}

Since $\dim H^1(X,\, K_X)\,=\, 1$, given any two nontrivial $K_X$--torsors
$A$ and $B$ on $X$, there is a holomorphic isomorphism of fiber bundles over
the identity map of $X$
$$
t\, :\, A\,\longrightarrow\, B
$$
and a constant $c\,\in\, {\mathbb C}\setminus\{0\}$ such that
$t(v+\omega)\,=\, t(v)+c\cdot\omega$ for all $v\,\in\, A_x$,
$\omega\,\in\, (K_X)_x$ and $x\,\in\, X$. In particular, the
two fiber bundles $A$ and $B$ are holomorphically isomorphic.

Let
\begin{equation}\label{f2}
\Omega\,:=\, {\mathcal K}({\mathcal D}_L)\,\in\, H^0({\mathcal C}_L,\,
\Omega^2_{{\mathcal C}_L})
\end{equation}
be the curvature of the connection ${\mathcal D}_L$ in \eqref{f1}. This
$\Omega$ is a holomorphic symplectic form on ${\mathcal C}_L$.
If we fix a trivialization of $L$ over an open subset $U$, then $\beta^{-1}(U)$
gets identified with the total space of $K_U$ (the trivialization produces a
holomorphic connection on $L\vert_U$). This identification takes the
symplectic form $\Omega\vert_{\beta^{-1}(U)}$ to the standard Liouville
symplectic form on $K_U$.

Note that
$$
{\mathcal C}_L\,\subset\, P = {\mathbb P}(J^1(L)\otimes L^*)\,=\,
{\mathbb P}(J^1(L))\, .
$$
The divisor ${\mathbb P}(J^1(L))\setminus {\mathcal C}_L$ will be
denoted by $D_\infty$. The symplectic form $\Omega$ in \eqref{f2}
has a pole of order two at $D_\infty$; dually the Poisson structure has a
double zero there. This follows from the above local
identification of $\Omega$ with the Liouville symplectic form.

\section{Geometry of sheaves on a Poisson surface}.
 
Following on work of Mukai \cite{Mu}, in the symplectic case, and Bottacin \cite{Bo}, 
in the Poisson case, a Poisson structure on a complex surface $P$ induces a Poisson 
structure on the various spaces of sheaves over the surface. The cases which will 
interest us are moduli spaces $\SM$ of sheaves $\SF$ with support of pure dimension 
one, and fixed numerical invariants. We will most of the time restrict to a subspace 
$\SM_0$ of pairs $(S\, ,\SF)$ of line bundles $\SF$ of fixed degree over a reduced 
curve $S$. The relevant geometry has been extensively covered for cases very similar 
to this one, in various places (see, e.g. \cite{HuMa}), so our presentation will be 
somewhat brief.
 
 Let $$\theta\, \in\, H^0(P,\, K_P^*)$$ be the Poisson structure on the smooth projective surface $P$; the divisor of $\theta$ is $D= 2D_\infty$. The Poisson structure on the moduli of sheaves is defined as follows, following \cite{Bo}: one has that the first order deformations of $\SF$ are given by the global Ext-group
 $\E xt^1(\SF,\, \SF)$, dually, one has the cotangent space $\E xt^1(\SF,\, \SF\otimes K_P)$. One has natural maps
 $$\E xt^1(\SF,\,\SF\otimes K_P)\otimes \E xt^1(\SF,\,\SF\otimes K_P)\longrightarrow \E xt^2(\SF,\,\SF\otimes K_P^2)
$$
$$
\buildrel{\theta}\over{\longrightarrow} \E xt^2(\SF,\SF\otimes K_P )\longrightarrow \C\, .$$
 The first arrow is the standard pairing; the second is multiplication by the Poisson structure; the third is Grothendieck-Serre duality. This defines the Poisson tensor on the moduli, as a section of the second exterior power of the tangent bundle.
 Alternately, one can define the Poisson tensor as a (skew) map from the cotangent space to the tangent space
 $$\E xt^1(\SF,\, \SF\otimes K_P) \, \longrightarrow\, \E xt^1(\SF,\,\SF)\, , $$
 given here by multiplication by the Poisson tensor. This indeed defines a Poisson structure, i.e., satisfies the correct integrability conditions. The latter can be proved directly; it also follows from a local isomorphism with the Hilbert scheme of points on the surface, as explained below.
 
 Let us restrict to the case of sheaves supported on curves; more specifically, to the moduli of sheaves whose generic element $\SF$ is a line bundle over a smooth curve $S$. Then one can make the definition more explicit, as in \cite{HuMa}, or indeed, several other references. One can take an extension of $\SF$ to a neighborhood of $S$, obtaining a resolution by locally free rank one sheaves on the neighborhood:
 $$0\longrightarrow \SF_1\longrightarrow \SF_0 \longrightarrow \SF\longrightarrow 0\, .$$
Over $S$, the sheaf $\SF_0$ is isomorphic to $\SF$, and $\SF_1$ is isomorphic to the tensor product of $\SF$ with the conormal bundle $N_S^*$. Computing global Ext, one then has for the tangent space
$$ 0\longrightarrow H^1(S,\SO) \longrightarrow \E xt^1(\SF,\SF) \longrightarrow H^0(S, N_S) = H^0(S, K_S(D))\longrightarrow 0\, .$$
The isomorphism of $N_S$ and $K_S(D)$ uses the Poisson structure and the Poincar\'e adjunction formula.
For the cotangent space, one has:
$$ 0\longrightarrow H^1(S,\,\SO(-D)) \longrightarrow \E xt^1(\SF,\,\SF\otimes K_D) \longrightarrow H^0(S,\, N_S) = H^0(S,\, K_S)\longrightarrow 0\, .$$
The Poisson structure, thought of as a homomorphism $\E xt^1(\SF,\,\SF\otimes K_D)\,\longrightarrow \,\E xt^1(\SF,\,\SF)$, is given by multiplication by the Poisson tensor $\theta$; it maps the cotangent sequence above to the tangent sequence. As in \cite{HuMa}, one has:

\begin{proposition} 
For our Poisson variety of pairs $(S,\SF)$, the symplectic leaves $\SL$ are given by asking that the intersection of the curve $S$ with $D= 2D_\infty$ remain constant, so that, to first order, deformations of the curve live in
$$H^0(S,\, N_S(-D)) \,=\, H^0(S,\, K_S)\,\subset\, H^0(S,\, N_S) \,= \,H^0(S, \,K_S(D))\, .$$
On the symplectic leaves, one then has for the tangent space (and so the cotangent) space:
$$
0\,\longrightarrow\, H^1(S,\SO)\,\longrightarrow\,T\SL\,\longrightarrow\, H^0(S, K_S)
\,\longrightarrow\, 0\, .$$
The subspace $H^1(S,\,\SO)$ is isotropic with respect to the symplectic form; the
map $$(S,\,\SF)\,\longrightarrow \,S$$ defines a
completely integrable system on $\SM_0$.
\end{proposition}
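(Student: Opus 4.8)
The plan is to realize the symplectic leaves as the integral manifolds of the distribution given by the image of the Poisson anchor $\E xt^1(\SF,\,\SF\otimes K_P)\to\E xt^1(\SF,\,\SF)$, and then to extract isotropy and the integrable structure directly from the two deformation sequences recorded above. First I would compute this image piece by piece, using that the anchor is multiplication by $\theta\in H^0(P,\,K_P^*)$, whose divisor is $D=2D_\infty$. On the quotient (curve-deformation) parts the anchor is the map $H^0(S,\,N_S(-D))\to H^0(S,\,N_S)$ induced by the inclusion $N_S(-D)\hookrightarrow N_S$ coming from $\theta|_S$; under $N_S=K_S(D)$ (Poincar\'e adjunction together with $K_P|_S=\SO_S(-D)$) this is exactly the inclusion $H^0(S,\,K_S)\hookrightarrow H^0(S,\,K_S(D))$. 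Its image consists of the infinitesimal deformations of $S$ that vanish on $S\cap D$, so the leaf is cut out by holding $S\cap D$ fixed, as claimed.

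Next I would show that the anchor is surjective onto the sub (line-bundle) part $H^1(S,\,\SO)$. The relevant map $H^1(S,\,\SO(-D))\to H^1(S,\,\SO)$ is induced by $\SO(-D)\hookrightarrow\SO$; by Serre duality on $S$ its transpose is the injection $H^0(S,\,K_S)\hookrightarrow H^0(S,\,K_S(D))$ just encountered, hence the map itself is onto. Combining the two parts yields the stated tangent sequence
$$0\,\longrightarrow\,H^1(S,\,\SO)\,\longrightarrow\,T\SL\,\longrightarrow\,H^0(S,\,K_S)\,\longrightarrow\,0\, .$$

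For isotropy I would evaluate the leaf symplectic form on $v_1,v_2\in H^1(S,\,\SO)$. Picking a preimage $\alpha_1\in H^1(S,\,\SO(-D))$ under the anchor (available by the surjectivity just shown), one has $\omega(v_1,v_2)=\langle\alpha_1,\,v_2\rangle$, the Grothendieck--Serre pairing. Restricted to the sub parts this pairing is the cup product $H^1(S,\,\SO(-D))\otimes H^1(S,\,\SO)\to H^2(S,\,\SO(-D))$, which vanishes because $S$ is a curve and so carries no $H^2$. Hence $H^1(S,\,\SO)$ is isotropic. The fiber of $(S,\SF)\mapsto S$ is the Jacobian of line bundles of the prescribed degree on $S$, whose tangent space is precisely this isotropic subspace; since $\dim H^1(S,\,\SO)=g=\dim H^0(S,\,K_S)=\tfrac12\dim\SL$, the fibers are Lagrangian, so $(S,\SF)\mapsto S$ is a Lagrangian torus fibration and therefore a completely integrable system.

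I expect the main obstacle to be the first step: pinning down the image of the anchor precisely, in particular checking that multiplication by $\theta$ induces exactly these inclusion-type maps on the sub- and quotient-pieces of the two sequences, and that the identifications $K_P|_S=\SO_S(-D)$ and $N_S=K_S(D)$ are compatible with the anchor. Once this is in place, the vanishing $H^2(S,-)=0$ and the dimension count make isotropy and the Lagrangian property immediate. A softer alternative would be to invoke the local Poisson isomorphism with the Hilbert scheme of points on the surface mentioned in the introduction, which produces Darboux coordinates and hence the integrable structure directly.
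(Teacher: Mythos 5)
Your proposal is correct and follows essentially the same route as the paper, which (citing Hurtubise--Markman) characterizes the leaves via the anchor map given by multiplication by $\theta$ carrying the displayed cotangent sequence into the tangent sequence, so that fixing $S\cap 2D_\infty$ cuts out the leaf and $H^1(S,\SO)$ sits as the isotropic Jacobian direction. Your added details --- Serre duality to get surjectivity of $H^1(S,\SO(-D))\to H^1(S,\SO)$, vanishing of $H^2$ on a curve for isotropy, and the dimension count making the Jacobian fibers Lagrangian --- are exactly the steps the paper leaves implicit, and your ``softer alternative'' via the Hilbert scheme is precisely the paper's subsequent proposition.
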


One can, again following \cite{HuMa}, define a local isomorphism of the symplectic leaves $\SL$ with the Hilbert scheme of points on the curve. The idea is to choose a fixed line bundle $\SG$ on a neighborhood of a curve $S$ in our family, so that the sheaves $\SF\otimes \SG$ are of degree $g=$genus($S$), and so have generically one non-zero section (up to scale) with divisor a sum $\sum_ip_i$ of points on $S$. These points can be thought of as points of $P$, cut out by the defining equation of $S$ and further equations, and so the divisor $\sum_ip_i$ on the curve defines an element
of the Hilbert scheme ${\rm Hilb}^g(P)$. This space has an obvious Poisson structure, induced from that on $P$. One has, as in \cite{HuMa}:

\begin{proposition}
The (locally defined) map $\SL \,\longrightarrow\, {\rm Hilb}^g(P)$ constructed above is Poisson.
\end{proposition}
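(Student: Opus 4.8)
The plan is to reduce the Poisson property to the equality of two symplectic forms and then to establish that equality by a residue computation localized at the marked points, following the pattern of \cite{HuMa}. A map between Poisson manifolds that carries symplectic leaves to symplectic leaves and restricts to a symplectomorphism on each is Poisson, so it suffices to show that the map sends the leaf $\SL$ symplectomorphically onto a symplectic leaf of ${\rm Hilb}^g(P)$. Both are smooth of dimension $2g$: for $\SL$ this is the preceding proposition, whose tangent sequence has outer terms $H^1(S,\SO)$ and $H^0(S,K_S)$, each of dimension $g$; for ${\rm Hilb}^g(P)$, on the open locus of $g$ distinct points lying in $P\setminus D$, Bottacin's Poisson structure is nondegenerate and is the symmetrization of the surface form $\Omega$ of \eqref{f2}. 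For generic $(S,\SF)$ the zeros of the chosen section avoid $D=2D_\infty$, so the image lies in this open leaf and the map is a local isomorphism there. Writing each point as $p_i=(x_i,\eta_i)$ and using the local identification of $\Omega$ with the Liouville form $dx\wedge d\eta$ noted after \eqref{f2}, the Hilbert-scheme form is $\omega_H=\sum_i dx_i\wedge d\eta_i$; the task is thus to verify $\omega_\SL=\sum_i dx_i\wedge d\eta_i$.

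First I would render the two factors of the tangent sequence geometric. The section $\sigma$ of $\SF\otimes\SG$ with divisor $\sum_i p_i$ furnishes, near each $p_i$, both a local coordinate along $S$ and a trivialization of $\SF\otimes\SG$. A tangent vector in the Jacobian direction $H^1(S,\SO)$, represented by a \v{C}ech cocycle, perturbs $\sigma$ and hence slides its zeros along $S$, while a vector in the curve direction $H^0(S,K_S)=H^0(S,N_S(-D))$ moves $S$ and carries the $p_i$ transversally. Computing the induced first-order motion of the zeros expresses $dx_i$ and $d\eta_i$, evaluated on a tangent vector, through the local data at $p_i$; because the two types of deformation act independently and move the points freely in $P\setminus D$, this also reconfirms that the map is \'etale.

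The heart of the argument, and the step I expect to be the main obstacle, is to show that the cohomological expression for $\omega_\SL$ localizes as $\sum_i dx_i\wedge d\eta_i$. By construction $\omega_\SL$ is the Serre-duality pairing $H^1(S,\SO)\otimes H^0(S,K_S)\to\C$ on the leaf, after the identifications $N_S\cong K_S(D)$, $N_S(-D)\cong K_S$ recorded above, which arise from multiplication by $\theta$ and the Poincar\'e adjunction formula. Representing a Jacobian vector by a $\bar\partial$-closed $(0,1)$-form and pairing it against the holomorphic one-form that represents the curve deformation, one rewrites the global pairing, via the Dolbeault--\v{C}ech dictionary and the residue theorem on $S$, as a sum of local residues supported at the zeros $p_i$ of $\sigma$. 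The contribution at each $p_i$ must then be shown to equal the canonical term $dx_i\wedge d\eta_i$. The delicate points are the bookkeeping of the twist by $\SG$, which drops out because $\sigma$ trivializes $\SF\otimes\SG$ near $p_i$, and the check that the curve-deformation produces the increment of $\eta_i$ dictated by the Liouville tautological one-form $\eta\,dx$ primitive to $\Omega$.

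Finally, I would observe that this residue computation uses the surface $P=\mathbb{P}(J^1(L))$ only through the local normal form of its symplectic structure, which is the standard Liouville form $dx\wedge d\eta$ exactly as in the Hitchin case of \cite{HuMa}. Hence the identification $\omega_\SL=\omega_H$ proceeds verbatim, showing that the locally defined map $\SL\to{\rm Hilb}^g(P)$ is a symplectomorphism onto its image leaf and therefore Poisson. The remaining configurations, where points collide or meet $D$, are handled by continuity, the Poisson condition being closed.
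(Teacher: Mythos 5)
The paper itself offers no proof of this proposition: it is stated ``One has, as in \cite{HuMa},'' with the argument entirely outsourced to that reference. Your proposal is correct in outline and follows precisely that cited route --- reduce the Poisson property to showing the locally defined map $\SL \longrightarrow {\rm Hilb}^g(P)$ is a local symplectomorphism onto the open symplectic leaf, then identify the leaf form with $\sum_i dx_i\wedge d\eta_i$ by the separation-of-variables residue computation --- so it matches the paper's (implicit) approach, with the caveat that the crucial residue identification is sketched rather than carried out, which is the same level of detail the paper itself settles for.
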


This shows, incidentally, that the section of $\Lambda^2(T\SM_0)$ that we have defined above on $\SM_0$ is indeed a Poisson structure, (in the sense of the corresponding symplectic forms on the leaves being closed), since the structure on ${\rm Hilb}^g(P)$ (a symmetric product on a Zariski open dense set) is obviously Poisson.

\section{$L$-connection-valued Higgs bundles}

We thus have a Poisson manifold, obtained as a moduli space of sheaves over $P$. The question is what does this correspond to over $X$. Not surprisingly, as $P$ is a
deformation of $\P(\SO\oplus K_X)$, the answer turns out to be a deformation of the moduli space of Higgs bundles over $X$. We will define a moduli space of pairs $(E\, ,\psi)$
with $E$ a holomorphic vector bundle over $X$, and $\psi$ a section of $End(E)$ with
values in the connections on $L$, i.e., a $\SC_L$-valued section. As we have seen, holomorphic sections of $\SC_L$ are rather rare, so we will allow poles.

\subsection{Polar parts}

Fix an effective divisor $C$ over $X$. Let
\begin{equation}\label{e7}
0\,\longrightarrow\, K_X\otimes {\mathcal O}_X(C)\,\longrightarrow\,
{\mathcal V}_L\otimes {\mathcal O}_X(C)
\,\stackrel{\widetilde\phi}{\longrightarrow}\, {\mathcal O}_X(C)\,\longrightarrow\, 0
\end{equation}
be the exact sequence obtained by tensoring \eqref{e1} with the holomorphic
line bundle ${\mathcal O}_X(C)$. This produces the short exact sequence
\begin{equation}\label{e5}
0\,\longrightarrow\, K_X\otimes {\mathcal O}_X(C)\,\longrightarrow\,
\widetilde{\mathcal V}_L\,:=\, {\widetilde\phi}^{-1}({\mathcal O}_X)\,
\stackrel{\widetilde\phi}{\longrightarrow}\, {\mathcal O}_X\,\longrightarrow\, 0
\end{equation}
using the inclusion of ${\mathcal O}_X$ in ${\mathcal O}_X(C)$ (recall that the divisor
$C$ is effective). We have a surjective homomorphism
$$
\widetilde{\mathcal V}_L \,\longrightarrow\, K_X(C)_C\, , 
$$
where $K_X(C)_C\,=\, K_X(C)\vert_C $ is the restriction to $C$ of the line bundle $K_X(C)$.

Indeed, the natural inclusions of sheaves
$$
K_X\,\hookrightarrow\, K_X(C) \ ~ \text{ and }~ \
{\mathcal V}_L\,\hookrightarrow\, \widetilde{\mathcal V}_L
$$
fit together in the following commutative diagram
$$
\begin{matrix}
&& 0 && 0\\
&& \Big\downarrow && \Big\downarrow\\
0& \longrightarrow & K_X & \longrightarrow & {\mathcal V}_L &
\stackrel{\phi}{\longrightarrow} & {\mathcal O}_X & \longrightarrow & 0\\
&& \Big\downarrow && \Big\downarrow && \Vert \\
0& \longrightarrow & K_X(C) & \longrightarrow &
\widetilde{\mathcal V}_L &
\stackrel{\widetilde\phi}{\longrightarrow} & {\mathcal O}_X & \longrightarrow & 0\\
&& \Big\downarrow && ~\ \Big\downarrow \SP\SP \\
&& K_X(C)_C &= &
K_X(C)_C\\
&& \Big\downarrow && \Big\downarrow\\
&& 0 && 0
\end{matrix} \quad .
$$
For example, if $C$ is reduced, then at each point $x_0$ of $C$, using the Poincar\'e adjunction formula, the fiber
${\mathcal O}_X(x_0)_{x_0}$ is identified with $T_{x_0}X$. Therefore,
$$
K_X(x_0)_{x_0}\,=\, \mathbb C\, .
$$
We refer to the homomorphism $\SP\SP: \widetilde{\mathcal V}_L\,\longrightarrow\, K_X(C)_C$ in the commutative diagram as the \textit{polar part homomorphism}.

\subsection{Definition of $L$-connection-valued Higgs bundles}

Let $L$ be a fixed line bundle over $X$. As before, fix an effective divisor $C$ over $X$. Fix a conjugacy class $A$ in $gl(n,\C)\otimes_\C K_X(C)_C$ under the action of the maps of $C$ into 
${\rm GL}(n,\C)$. For $C$ reduced, this amounts to fixing a ${\rm GL}(n,\C)$--conjugacy class in $gl(n,\C)\otimes_\C K_X(C)_x$ for every $x\,\in\, C$.

An \textit{$L$-connection-valued Higgs bundle with poles at $C$, and polar part in $A$} is
a pair $(E\, ,\psi)$, where $E$ is a holomorphic vector bundle on $X$ and
$$
\psi\,\in\, H^0(X,\,End(E)\otimes \widetilde{\mathcal V}_L)
$$
(see \eqref{e5}) such that the following two conditions hold:
\begin{enumerate}
\item the image $(\text{Id}\otimes\widetilde{\phi})(\psi)\, \in\, H^0(X,\, 
End(E))$, where $\widetilde{\phi}$ is the homomorphism in \eqref{e5}, coincides
with the identity endomorphism of $E$, and

\item the polar part of $\psi$ lies in $A$.
\end{enumerate}

Let $(E\, ,\psi)$ be such a $L$-connection-valued Higgs bundle. Using the Lie algebra
structure of the fibers of $End(E)$ together with the natural projection
$$
\widetilde{\mathcal V}_L\otimes \widetilde{\mathcal V}_L\,\longrightarrow\,
\bigwedge\nolimits^2 \widetilde{\mathcal V}_L\, ,
$$
we get a homomorphism
$$
(End(E)\otimes \widetilde{\mathcal V}_L)\otimes (End(E)\otimes \widetilde{\mathcal V}_L)
\,\longrightarrow\,End(E)\otimes \bigwedge\nolimits^2 \widetilde{\mathcal V}_L\, .
$$
The image of any $\alpha_1\otimes\alpha_2$ by this homomorphism will be denoted
by $\alpha_1\bigwedge\alpha_2$. Since the identity map of $E$ commutes with every
endomorphism of $E$, from the given condition that
$(\text{Id}\otimes\widetilde{\phi})(\psi)\, =\, \text{Id}_E$ it follows immediately
that
\begin{equation}\label{e8}
\psi\bigwedge\psi\,=\, 0\, .
\end{equation}

\subsection{Moduli of $L$-connection-valued Higgs bundles}

A $L$-connection-valued Higgs bundle $(E\, ,\theta)$ with poles at $C$, and polar part in $A$ reduces to a parabolic Higgs bundle when $L$ has degree zero, and one can use this to define stability and obtain a moduli space. It is unclear what the correct definition of stability should be when the degree of $L$ is non-zero; we are here, however, interested in the local geometry, and so we will simply restrict our attention to $L$-connection-valued Higgs bundles $(E\, ,\psi)$
such that the underlying vector bundle $E$ is stable. For this reason we also assume
that
$$
\text{genus}(X)\, \geq\, 2\, .
$$

Henceforth, we will consider only the space $\SN_0$ consisting of those
$L$-connection-valued Higgs bundles $(E\, ,\psi)$
for which the underlying vector bundle $E$ is stable. The moduli space
of stable vector bundles exists as a smooth quasiprojective variety after we fix
the rank and the degree. For a stable vector bundle $E$ on $X$, we have
$$
H^1(X,\, End(E)\otimes K_X)\,=\, H^0(X,\, End(E))^*\, =\, \mathbb C\, .
$$
The moduli space $\SN_0$ of $L$-connection-valued Higgs bundles of fixed rank and degree is a holomorphic
fiber bundle over the moduli space of stable vector bundles of that rank and degree.

\subsection{Poisson and symplectic structures}

We begin by writing out a deformation complex for our Higgs bundles, at a point $(E\, ,
\psi)$. The way to do this is by now fairly standard (\cite{BiRa}, \cite{Ma}).
Indeed, consider first the deformations $\psi(\epsilon)$ of $\psi$. First of all, we
note that the projection of $\psi(\epsilon)$ to $End(E)$ is a constant
element (the identity), so that the $\epsilon$-derivative $\psi'$ of $\psi$ at
$\epsilon\,=\,0$ lives in $End(E)\otimes K_X(C)$. Furthermore, the constraint that
the polar part of $\psi$ lie in a fixed conjugacy class means that the polar part of
$\psi'$ is the polar part of an element $[\alpha\, ,\psi]$, where $\alpha$ is a local
holomorphic section of $End(E)$. (Note that the bracket of $\psi$ with any section of
$End(E)$ automatically takes values in $End(E)\otimes K_X(C)$). The first order
deformations are thus locally of the form $\psi(\epsilon)\,= \,\psi +
\epsilon(\psi_{reg}' + [\alpha,\psi])$, where $\alpha, \psi'_{reg}$ are holomorphic. Let
$End(E)_\psi\otimes K_X\,\subset\, End(E)\otimes K_X(C)$ denote the subsheaf of such
elements $\psi_{reg}' + [\alpha,\psi]$. For example, near a (simple) pole $p\in C$, if
the polar part's only non-zero entry is in the $(1,1)$ position (choosing a
trivialization of $E$), the sheaf $End(E)_\psi$ near $p$ would be the sheaf of
sections of $End(E)$ with simple poles allowed at $p$ in the $(1,j)$ and $(j,1)$
entries, $j\neq 1$.

The derivative $\psi'$ of our local deformations of $\psi$ takes values in
$End(E)_\psi$; bundles, on the other hand, have deformations living in $H^1(X,\,
End(E))$. The two deformations fit together, so that the tangent space at $(E\, ,\psi)$
is the first
hypercohomology of the complex 
\begin{equation}\label{tangent}
End(E)\stackrel{[\psi,\cdot]}{\longrightarrow}End(E)_\psi\otimes K_X,
\end{equation}
giving a sequence
$$H^0(X, End(E)_\psi\otimes K_X) \longrightarrow T\SN_0 \longrightarrow H^1(X, End(E))$$
Dually, let $End(E)_\psi^0$ be the kernel of $End(E)\stackrel{[\psi,\cdot]}{\longrightarrow}(End(E)_\psi\otimes K_X)_C$, i.e., the subsheaf of sections of $End(E)$ which remain holomorphic after bracketing with $\psi$. This sheaf $End(E)_\psi^0$ is the dual bundle to $End(E)_\psi$. One has that the cotangent space is the first hypercohomology of the complex
\begin{equation}\label{cotangent}
End(E)_\psi^0\stackrel{[\psi,\cdot]}{\longrightarrow}End(E)\otimes K_X
\end{equation}
The cotangent complex embeds naturally into the tangent complex, and on the level of hypercohomology, this induces the Poisson structure, thought of as a
homomorphism $T^*\longrightarrow T$, as in \cite{HuMa}. One would, of course, like to see that the structure is symplectic. To do this, one can consider it as a reduction of the symplectic structure on a larger moduli space of triples $(E, tr,\psi)$, where $E$ is a bundle as before, $tr$ is a trivialization of $E$ over $C$, and the conjugacy class of the polar part of $\psi$ at $C$ is arbitrary. The deformation complex for the tangent space, and the cotangent space, now becomes
$$End(E)(-C)\stackrel{[\psi,\cdot]}{\longrightarrow}End(E)\otimes K_X(C)$$
The symplectic structure is induced by the identity map, which of course induces an isomorphism. The space has a Hamiltonian action by the maps of $C$ into
${\rm GL}(n,\C)$, whose moment map is the evaluation at $C$ of $\psi$ in the trivialization $tr$. Reducing, we get our moduli space. This reduction basically just copies the generalized Hitchin case; see \cite{Ma}. 
 
\subsection{Spectral data for a $L$-connection-valued Higgs bundle}

One has the bundle of affine lines $\SC_L \subset {\mathcal V}_L$ as the elements mapping to $1$ in $\SO$. Let
\begin{equation}\label{f}
f\, :\, {\mathcal V}_L\,\longrightarrow\, X
\end{equation}
be the natural projection. The pulled back vector bundle $f^*\widetilde{\mathcal V}_L$
has a tautological section; this tautological section will be denoted by $\eta$; we restrict it to $\SC_L$. If one compactifies from $\SC_L$ to $P$, adding in the divisor $D_\infty$ , the tautological section $\eta$ has a single pole along infinity in $P$. As above, let $\overline{\beta}: P\longrightarrow X $ denote the projection from $P$.

Let $(E\, ,\psi)$ be a $L$-connection-valued Higgs bundle of rank $n$. Let $E_0$ be
the subsheaf of sections $s$ of $E$ such that $\psi(s)$ is holomorphic as a section of
$E\otimes{\mathcal V}_L$, i.e., has no poles at $C$. Noting that on $\SC_L$, the difference $\overline{\beta}^*\psi- \text{Id}_{\overline{\beta}^*E}\otimes\eta$ takes values in $\overline{\beta}^*(K_X(C))$, we have the homomorphism over $P$
\begin{equation}\label{e9}
0\,\longrightarrow \,\overline{\beta}^*(E_0\otimes K^*_X )(-D_\infty)\,
\stackrel{\overline{\beta}^*\psi- \text{Id}_{\overline{\beta}^*E}\otimes\eta}{\longrightarrow}\,
\overline{\beta}^*E\, \longrightarrow \SF\longrightarrow 0
\end{equation}
where $\eta$ is the tautological section defined above; this sequence defines a quotient sheaf $\SF = \SF(E,\psi)$. Let
$$
\bigwedge\nolimits^n (\overline{\beta}^*\psi- \text{Id}_{\overline{\beta}^*E}\otimes\eta)\, :\,
\bigwedge\nolimits^n(\overline{\beta}^*(E_0\otimes K^*_X)(-D_\infty))\,
\longrightarrow\, \bigwedge\nolimits^n (\overline{\beta}^*E)
$$
be the corresponding homomorphism between the exterior products, where $r$ is
the rank of $E$. Let
$$
f\, \in\, H^0(P, 
(\bigwedge\nolimits^n (\overline{\beta}^*\psi- \text{Id}_{\overline{\beta}^*E}\otimes\eta)^*\otimes
(\bigwedge\nolimits^n(\overline{\beta}^*E))
$$
be the section given by this homomorphism. The subscheme cut out by the vanishing of $f$ is 
the \textit{spectral curve} $S(E\, ,\psi)$. It is the support of the sheaf $\SF$.

\begin{proposition} The pair $(S, \SF)$ encodes the pair $(E,\psi)$.
\end{proposition}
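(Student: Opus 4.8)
The plan is to show that the construction $(E,\psi)\mapsto (S,\SF)$ admits an explicit inverse, built by pushing forward the spectral sheaf $\SF$ along $\overline{\beta}$ and letting the tautological section $\eta$ act on it. I work in the generic situation in which $S = S(E,\psi)$ is reduced and $\SF$ is a line bundle on $S$, so that $\overline{\beta}|_S : S \to X$ is finite and flat of degree $n$; the general case follows from the same direct-image formalism applied to the pure one-dimensional sheaf $\SF$.

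First I would recover $E$. Applying $\overline{\beta}_*$ to the defining sequence \eqref{e9} and using the projection formula together with $\overline{\beta}_*\SO_P = \SO_X$, one gets $\overline{\beta}_*\overline{\beta}^*E = E$. The kernel term $\overline{\beta}^*(E_0\otimes K_X^*)(-D_\infty)$ is locally free (here $E_0$ is locally free because $X$ is a smooth curve), and it restricts on each fibre $\P^1$ of $\overline{\beta}$ to a direct sum of copies of $\SO(-1)$, so that both $\overline{\beta}_*$ and $R^1\overline{\beta}_*$ of it vanish. The long exact sequence of higher direct images then yields a canonical isomorphism $\overline{\beta}_*\SF \cong E$. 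This is the step that pins down the underlying bundle.

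Next I would recover $\psi$. On the support $S$ the two operators $\overline{\beta}^*\psi$ and $\text{Id}\otimes\eta$ induce the same endomorphism of $\SF$, since their difference is exactly the map of which $\SF$ is the cokernel; hence multiplication by the tautological section $\eta$ is well defined on $\SF$ as a map $\SF \to \SF\otimes\overline{\beta}^*\widetilde{\mathcal V}_L$. Pushing this forward and using the projection formula together with $\overline{\beta}_*\SF \cong E$ produces a homomorphism $\psi' : E \to E\otimes\widetilde{\mathcal V}_L$, and by construction $\psi' = \psi$. It remains to check the two defining conditions of an $L$-connection-valued Higgs bundle: the normalization $(\text{Id}\otimes\widetilde\phi)(\psi') = \text{Id}_E$ holds because $\eta$ takes values in the affine slice $\SC_L = \phi^{-1}(1_X(X))$, so that $\widetilde\phi(\eta)=1$ fibrewise and this is preserved under $\overline{\beta}_*$, while the polar-part condition follows by reading off the behaviour of $S$ and of the $\eta$-action along the divisor over $C$, which reproduces the prescribed conjugacy class.

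Finally I would verify that the two assignments are mutually inverse. The composite $(E,\psi)\mapsto(S,\SF)\mapsto(E,\psi)$ is the identity by the two isomorphisms above, and conversely $S$ is recovered as the support of $\SF$ while the $\SO_S$-module structure of $\SF$ is recovered from the $\eta$-action, so no information is lost. The main obstacle I expect is not the generic computation but the bookkeeping along $D_\infty$ and $C$: one must check that the $(-D_\infty)$ twist and the affine (rather than linear) nature of the $\eta$-action are precisely what force $(\text{Id}\otimes\widetilde\phi)(\psi)=\text{Id}_E$ and the correct polar part, and that $\SF$ is torsion-free on $S$ so that its direct image genuinely returns $E$ rather than a proper subsheaf. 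These are exactly the points at which the present $L$-connection-valued setting departs from the ordinary Hitchin spectral correspondence.
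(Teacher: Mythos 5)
Your proposal is correct and follows essentially the same route as the paper: push down the defining sequence \eqref{e9}, use the vanishing of the direct images of the twisted kernel term (which you justify via its restriction to the $\P^1$ fibres) to identify $\overline{\beta}_*\SF$ with $E$, and recover $\psi$ as the pushdown of the $\eta$-action on $\SF$. The additional checks you include (the normalization $(\text{Id}\otimes\widetilde\phi)(\psi)=\text{Id}_E$, the polar part, and mutual inverseness) are details the paper leaves implicit, but they are consistent with its argument.
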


\begin{proof}
The push-down $\overline{\beta}_*(\SF)$ is isomorphic to $E$; this follows by pushing
down the sequence (\ref{e9}), and noting that all the direct images of $\overline{\beta}^*(E\otimes K^*_X(-C))(-D_\infty)$ vanish. The section $\psi$ is then the pushdown of $\eta: \SF\longrightarrow \SF\otimes\overline{\beta}^*(\widetilde{\mathcal V}_L)$.
\end{proof}

We note that $\psi$ considered as a section of $End(E)\otimes {\mathcal V}_L$ has poles
at $C$, with polar parts lying in $End(E)\otimes K_X$. If the polar part of $\psi$ at $C$
has rank greater than one, so that the ``eigenspace with infinite eigenvalue'' at $C$
also has rank greater than one, this means that the spectral curve has several branches converging on infinity as one goes to $C$. We therefore put in some simplifying assumptions from now on, namely that the polar parts should be of rank one; we also ask that
$C$ be reduced, so that the poles are simple, and that the spectral curve be smooth. With these restrictions, the conjugacy class of the polar part of the connection is given simply by the residue of the trace at points $p_i$ of $C$:
$$res_i\circ tr \,:\, H^0(X, End(E)\otimes K_X(C))\longrightarrow \C\, .$$
If this is non-zero, there is only one branch of the spectral curve intersecting
$D_\infty$ transversely at $p_i$; if it is zero, there will be a simple branch point.. 

It is useful to expand the $End(E)\otimes K_X $-component $\hat \psi$ of $\psi$ in an adapted trivialization of the rank $n$ bundle $E$ near a point $p_i$ of $C$. Let $x$ be a coordinate on $X$ with $p_i$ corresponding to $x=0$, and $\mu = \eta^{-1}$ be the $\P^1$-coordinate, so that $D_\infty$ is given by $\mu = 0$
When $res_i\circ tr(\psi)\,\neq\, 0$ (``first case''), one can normalize to the form
\begin{equation}\label{nform1}\hat\psi =\begin{pmatrix} a_{-1} x^{-1} + a_0 +a_1x+...& 0\\0 & A_0 +A_1x+..\end{pmatrix},\end{equation}
where $a_i$ are constants, and $A_i$ are $(n-1)\times (n-1)$ matrices. One has $res_i\circ
tr(\psi)\,=\, a_{-1}$.

If $res_i\circ tr(\psi)\,=\,0$, (``second case'') one has the normal form
\begin{equation}\label{nform2}\hat\psi =\begin{pmatrix} 0& a_0 +a_1x+...& 0\\x^{-1}& b_0+ b_1x+... & 0 \\ 0&0 & A_0 +A_1x+..\end{pmatrix},\end{equation}
with $a_i, b_i$ constants, $a_0\neq 0$ and $A_i$ matrices of size $(n-2)\times (n-2)$. 

In both cases, the intersection of the spectral curve with $\mu^2=0$ is given by $(res_i\circ tr(\psi)) \mu - x\,= \,0$: as noted above, the first case corresponds to the point at infinity being a regular point for the projection to $X$, while the second case is a simple branch point.
Also, if one looks at the intersection of the spectral curve with $D_\infty$, one has that this trace residue is in essence the derivative of the projection of the spectral curve at the points of intersection of the spectral curve with $D_\infty$ ; in other words, fixing the conjugacy class of the rank one residue amounts to fixing the intersection of the spectral curve with the first formal neighborhood $2D_\infty$ of $D_\infty$. If one has fixed this conjugacy class, the family of spectral curves intersects the divisor of the Poisson structure on $P$ in a fixed locus, and so the family of $(S,\SF)$ corresponding to the $(E,\psi)$ lie in a fixed symplectic leaf of the family of sheaves on $P$.

Let $\SN_{0,reg}$ be the subset of the moduli space $\SN_0$ of elements $(E,\psi)$ for which the spectral curves are smooth and reduced.

\begin{theorem}\label{thmlas}
Let $C$ be reduced, and let the conjugacy classes $A_i$ at each point of $C$ be of rank one. The map $\S: \SN_{0,reg}\,\longrightarrow\,
\SM_0$ which associates to $(E,\psi)$ the spectral data $(S,\SF)$ maps to a fixed symplectic leaf $\SL$ of $\SM_0$. The map $\S$ is symplectic.
\end{theorem}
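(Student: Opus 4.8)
The plan is to establish the two assertions separately. The leaf statement is a bookkeeping consequence of the normal forms already recorded, while the symplectic statement requires matching two a priori different constructions of the symplectic form, one on $\SN_0$ and one on $\SL$.

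For the first assertion I would invoke the description of the symplectic leaves of $\SM_0$ given above: a leaf is cut out by fixing the scheme-theoretic intersection $S\cap D$ with $D=2D_\infty$. The normal forms \eqref{nform1} and \eqref{nform2} already compute this intersection locally, since at each $p_i\in C$ the spectral curve meets $2D_\infty$ along $(res_i\circ tr(\psi))\mu-x=0$, which depends only on the residue $res_i\circ tr(\psi)$, and hence only on the prescribed rank-one conjugacy class $A_i$. As $C$ is reduced and $S$ is disjoint from $D_\infty$ away from $C$, these finitely many local data determine $S\cap D$ completely. Therefore $S\cap D$ is constant over $\SN_{0,reg}$ and the image of $\S$ lies in a single leaf $\SL$.

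For the symplectic statement I would first promote the spectral correspondence to deformations. Pushing the defining sequence \eqref{e9} down along $\overline\beta$, together with the isomorphism $\overline\beta_*\SF\cong E$ and the vanishing of higher direct images used in the proof that $(S,\SF)$ encodes $(E,\psi)$, identifies the derived pushforward of a resolution of $\SF$ with the two-term deformation complex $End(E)\stackrel{[\psi,\cdot]}{\longrightarrow}End(E)_\psi\otimes K_X$ on $X$. Taking hypercohomology and comparing with the description
$$0\longrightarrow H^1(S,\SO)\longrightarrow T\SL\longrightarrow H^0(S,K_S)\longrightarrow 0$$
of the tangent space to the leaf yields a canonical isomorphism $d\S$ between $T_{(E,\psi)}\SN_{0,reg}$ and $T_{(S,\SF)}\SL$, compatible with the projections to the base of spectral curves and with the Jacobian fibres $H^1(S,\SO)$. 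It then remains to check that $d\S$ intertwines the two forms. Both are cup-product pairings followed by a duality map: on $\SN_0$ one pairs the tangent complex with its dual cotangent complex \eqref{cotangent} and applies Serre duality on $X$; on $\SL$ one pairs $\E xt^1(\SF,\SF\otimes K_P)$ with itself, contracts with the Poisson tensor $\theta$, and applies Grothendieck--Serre duality on $P$. The content is that these agree under pushforward, which reduces to a local statement: away from $D_\infty$ the tensor $\theta$ is the fibrewise inverse of the symplectic form $\Omega=\SK(\SD_L)$ of \eqref{f2}, and under the identification of $\Omega$ with the Liouville form on $K_U$ furnished by a local trivialization of $L$, contracting with $\theta$ and integrating along the fibres of $\overline\beta$ reconstitutes exactly the $K_X$-valued Serre-duality pairing defining the form on $\SN_0$. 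The constraints also match, since fixing $A_i$ forces $\SN_0$-deformations into $End(E)_\psi$ and simultaneously forces $\SM_0$-deformations to preserve $S\cap 2D_\infty$.

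This form-matching is the main obstacle, and rather than compare the two duality pairings head-on I expect it is cleaner to factor through the Hilbert scheme. By the Proposition above the locally defined map $\SL\longrightarrow {\rm Hilb}^g(P)$ is Poisson, and on the dense symmetric-product locus its symplectic form is $\sum_i\Omega$ evaluated at the points $(x_i,\eta_i)$ of $\SC_L\subset P$ where the normalized section of $\SF$ vanishes. One then checks, exactly as in the Hitchin case treated in \cite{HuMa} and the symplectic reduction of \cite{Ma}, that these same $(x_i,\eta_i)$ are Darboux coordinates for the form on $\SN_0$. The single new point relative to \cite{HuMa} is that $\eta$ now records the value of a connection on $L$ rather than a cotangent vector, so the ambient surface form is the curvature $\Omega$ of $\SD_L$ rather than the Liouville form on $\K(D)$; since these two are locally identified, the separation-of-variables computation transcribes verbatim, and the symplecticity of $\S$ follows.
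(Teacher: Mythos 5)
Your treatment of the first assertion is fine and coincides with the paper's: the normal forms \eqref{nform1}, \eqref{nform2} show that the scheme-theoretic intersection $S\cap 2D_\infty$ is determined by the residues $res_i\circ tr(\psi)$, hence by the fixed rank-one classes $A_i$, so the image of $\S$ lies in a single leaf. The symplectic assertion, however, is where your proposal has a genuine gap. The route you ultimately commit to --- factoring through ${\rm Hilb}^g(P)$ and asserting that the divisor points $(x_i,\eta_i)$ are Darboux coordinates for the form on $\SN_0$ because the separation-of-variables computation of \cite{AHH}, \cite{Hu}, \cite{HuMa} ``transcribes verbatim'' --- defers exactly the content of the theorem to an analogy. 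Since the Proposition already gives that $\SL\longrightarrow {\rm Hilb}^g(P)$ is Poisson (and generically a local isomorphism), the claim that the composite $\SN_{0,reg}\longrightarrow {\rm Hilb}^g(P)$ is symplectic \emph{is} the theorem; it cannot be cited into existence. Nor is the verbatim transfer automatic: the identification of $\SC_L\vert_U$ with $K_U$ depends on a choice of trivialization of $L$, the induced fibre coordinates on overlaps differ by the closed forms $d\log g_{UV}$, and when ${\rm degree}(L)\neq 0$ the torsor $\SC_L$ is nontrivial (Lemma \ref{lem1}), so there is no global Higgs field underlying $\psi$ and the surface $P$ is a genuinely different ruled surface from $\overline{\K}(D)$ --- which is the whole point of the paper. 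One would have to redo the residue computation and check that the affine gluing shifts make no net contribution; that is a real argument, not a transcription.

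The irony is that the route you sketch and then abandon as ``the main obstacle'' is essentially the paper's proof, and it is cleaner than you fear, because one never needs to unwind the two duality pairings or perform any fibre integration. The paper writes the tangent and cotangent spaces of $\SM_0$ at $(S,\SF)$ as first hypercohomologies of two-term complexes on $P$ obtained from the resolution \eqref{e9}, pushes these down to $X$ using Lemma \ref{lemlas} to get \eqref{tgt} and \eqref{cotgt}, and then observes that restricting to the leaf replaces $E_0^*\otimes E_\psi$ by its subsheaf $End(E)_\psi$ (again via the normal forms), yielding precisely the complexes \eqref{tgt2} and \eqref{cotgt2} that define the deformation theory of $\SN_0$. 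The Poisson map on $\SM_0$ is multiplication by $\theta$, i.e.\ the inclusion $\SO(-2D_\infty)\hookrightarrow\SO$ applied to these complexes, and under pushdown this becomes the natural inclusion of \eqref{cotgt2} into \eqref{tgt2} --- which is, by definition, the Poisson structure on $\SN_0$. Both structures are thus given by the same map of complexes, and symplecticity follows with no pairing comparison at all; your step of ``contracting with $\theta$ and integrating along the fibres'' is replaced by this purely sheaf-theoretic matching.
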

 
 We first prove a lemma. Noting that we have already that the pushdown to $X$ from $P$ of $\SF$ is $E$, we obtain:

 \begin{lemma}\label{lemlas}
The pushdown to $X$ from $P$ of $\SF(-D_\infty)$ is the subsheaf $E_0$ of sections $s$ of $E$ such that $\psi(s)$ is finite in $E$; that of $\SF(-2D_\infty)$ is the subsheaf $E_{00}$ of sections $s$ of $E$ such that $\psi(s)$ lies in $E_0$;
 that of $\SF(D_\infty)$ is the image $E_\psi \,:=\, \psi(E)$ in $E(C)$.
 
 The sheaf $End_\psi$ defined above can be identified as the subsheaf of $E_0^*\otimes E_\psi$ with no second order poles at $C$, and with first order polar part having trace zero.
 \end{lemma}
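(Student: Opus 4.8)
The plan is to exploit that the restriction $\overline{\beta}\vert_S : S \to X$ is finite and flat of degree $n$, so that $\overline{\beta}_*$ is exact on sheaves supported on $S$ and carries the line bundle $\SF$ to the rank $n$ bundle $\overline{\beta}_*\SF = E$ already established. Since $\SO_P(D_\infty) = \SO_P(1)$ meets each fibre of $\overline{\beta}$ once, its restriction to $S$ is $\SO_S(S\cap D_\infty)$, and the pole structure of $\psi$ forces $S\cap D_\infty$ to be an effective divisor lying entirely over $C$; in particular, away from $C$ every twist $\SF(kD_\infty)$ agrees with $\SF$, so all four sheaves in the statement coincide with $E$ off $C$ and the entire content is local over the points of $C$.

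First I would set up the elementary--modification picture. Writing $s_\infty \in H^0(P, \SO_P(D_\infty))$ for the canonical section vanishing on $D_\infty$, multiplication by $s_\infty$ gives, for each $k$, a short exact sequence
\[
0 \longrightarrow \SF(-kD_\infty) \xrightarrow{\; s_\infty\;} \SF(-(k-1)D_\infty) \longrightarrow \SF(-(k-1)D_\infty)\vert_{S\cap D_\infty} \longrightarrow 0 ,
\]
and symmetrically in the increasing direction. Pushing these down (exactly, by finiteness) realises $\overline{\beta}_*\SF(-D_\infty)$ and $\overline{\beta}_*\SF(-2D_\infty)$ as successive lower modifications of $E$ supported over $C$, and $\overline{\beta}_*\SF(D_\infty)$ as an upper modification $E \subset \overline{\beta}_*\SF(D_\infty) \subset E(C)$. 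At the same time, the tautological section $\eta$, which has a simple pole along $D_\infty$, induces a map $\SF \to \SF \otimes \overline{\beta}^*\widetilde{\SV}_L \otimes \SO_P(D_\infty)$, whose pushdown $\overline{\beta}_*$ is exactly $\psi$; this is what lets me identify the upper modification with the image $\psi(E)$.

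The real work is the local identification at a point $p \in C$ using the normal forms \eqref{nform1} and \eqref{nform2}. In the first (unramified) case, $S$ meets $D_\infty$ transversally in one point over $p$, the modification takes place in the single polar direction $e_1$, and a one-line check gives $\overline{\beta}_*\SF(-D_\infty) = \{s : s_1(p)=0\} = E_0$ going down and $\overline{\beta}_*\SF(D_\infty) = \langle x^{-1}e_1, e_2,\dots,e_n\rangle = \psi(E) = E_\psi$ going up, with $E_{00}$ obtained by iterating. The hard part, and the main obstacle, is the second case \eqref{nform2}: here the two relevant eigenvalues both run off to infinity and collide, $\overline{\beta}\vert_S$ ramifies over $p$, and the naive eigenvector description degenerates (the limiting eigenvector points in the $e_2$ direction, not the direction in which $E_0$ is modified). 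One must instead write the map $\overline{\beta}^*\psi - \mathrm{Id}\otimes\eta$ of \eqref{e9} explicitly in the adapted frame and the fibre coordinate $\mu = \eta^{-1}$, present $\SF$ as the cokernel module over $\SO_S$, and compute its $\SO_X$--module structure directly; twisting by $\SO_S(\mp(S\cap D_\infty)) = \mu^{\pm1}\SO_S$ and pushing down then yields precisely $\{s_1(p)=0\} = E_0$ (resp. $\psi(E) = E_\psi$), reconciling the apparent discrepancy. Because each modification is canonical at $p$, these local identifications glue to the global sheaves.

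Finally, for the identification of $End_\psi := End(E)_\psi$, I would use the local descriptions of $E_0$ and $E_\psi$ just obtained. In the frame of \eqref{nform1} the sheaf $E_0^* \otimes E_\psi$ allows a double pole in the $(1,1)$ entry, simple poles in the $(1,j)$ and $(j,1)$ entries for $j \neq 1$, and is holomorphic elsewhere; imposing \emph{no second order pole} removes the $(1,1)$ double pole, and then \emph{trace-zero first order polar part} forces the remaining $(1,1)$ simple pole to vanish (it is the only diagonal polar entry), leaving exactly the sections of $End(E)$ with simple poles in the $(1,j)$ and $(j,1)$ slots and holomorphic elsewhere --- which is the local description of $End(E)_\psi$ recorded after \eqref{tangent}. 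The second case is handled the same way using the $2\times 2$ block of \eqref{nform2}, and matching against the intrinsic description of $End(E)_\psi$ as the sheaf of local sections $\psi_{reg}' + [\alpha,\psi]$ completes the proof. The delicate points are again confined to the ramified case and to checking that the trace-zero condition cuts out exactly the bracket-generated polar parts.
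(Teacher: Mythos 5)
Your proposal is correct and follows essentially the same route as the paper's proof: pushing down the twisting exact sequences $0\to \SF((k-1)D_\infty)\to \SF(kD_\infty)\to \SF(kD_\infty)\vert_{D_\infty}\to 0$ to realise the pushdowns as modifications of $E$ along $C$, then verifying the identifications locally via the normal forms \eqref{nform1} and \eqref{nform2}, including the same pole-structure comparison (double pole in the $(1,1)$ entry versus simple poles in the $(1,k)$, $(k,1)$ entries) that cuts $End(E)_\psi$ out of $E_0^*\otimes E_\psi$ by the no-second-order-pole and trace-zero conditions. The only difference is one of emphasis: you spell out the ramified case \eqref{nform2} in detail, where the paper simply states that it is analyzed in the same way.
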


\begin{proof}[Proof of Lemma \ref{lemlas}]
The first statements follow from the exact sequences
$$0\longrightarrow \SF((n-1)D_\infty)\longrightarrow \SF(nD_\infty)\longrightarrow \SF(nD_\infty)|_{D_\infty}\longrightarrow 0\, .$$
One can also see this using the explicit normal forms; for example, in the first case, $E_0$ is the subsheaf of sections of $E$ with first entry vanishing at $x=0$, and $E_{00}$ is the subsheaf of sections whose first entry vanishes to order two. The statement about $End(E)_\psi$ can similarly be seen in local coordinates: in our first case, $E_0^*\otimes E_\psi$ consist of matrices whose $(1,1)$ term has a pole of order two, and whose $(1,k)$ and $(k,1)$ terms have simple poles, with the other entries being holomorphic, while $End(E)_\psi$ consists of matrices whose only poles are simple, and occur in the $(1,k)$ and $(k,1)$ entries for $k\neq1$. The second case can be analyzed in the same way.
\end{proof}
 
\begin{proof}[Proof of Theorem \ref{thmlas}] We have already seen that the image of $\S$ lies in a symplectic leaf. We now want to see that the map is symplectic.

The sequence \eqref{e9} gives us a resolution of $\SF$. Taking duals and tensoring with $\SF$, one finds that the tangent space to $\SM_0$ at $(S,\SF)$ is the first hypercohomology of
$$\overline{\beta}^*E^*\otimes \SF
\,\longrightarrow\,\overline{\beta}^*(E_0^*\otimes K_X )(D_\infty)\otimes \SF\, ,$$
while the cotangent space is the first hypercohomology of 
$$\overline{\beta}^*E^*\otimes \SF(-2D_\infty)\,\longrightarrow\,
\overline{\beta}^*(E_0^*\otimes K _X )(-D_\infty)\otimes \SF\, ,$$
recalling that the canonical bundle of $P$ is $\SO(-2D_\infty)$. The Poisson tensor, as a homomorphism from the cotangent space to the tangent space, is given simply by multiplication by the Poisson tensor on $P$ on these complexes. We can push them down to $X$, so that our tangent space is now the first hypercohomology of 
\begin{equation} \label{tgt} End(E){ \stackrel{[\psi,\cdot]} {\longrightarrow } }(E_0^*\otimes E_\psi) \otimes K_X\end{equation}
with that of the cotangent space as the first hypercohomology of 
\begin{equation} \label{cotgt}E^*\otimes E_{00} \stackrel {[\psi,\cdot]} {\longrightarrow } End(E_0) \otimes K_X = End(E)\otimes K_X\end{equation}

The leaf $\SL$ is characterized as having support with fixed intersection with $2D_\infty$. Referring to its defining
complex (\ref{e9}), one is interested in deformations $\psi'$ of the map $\psi$ which, when mapped to $\SF$, vanish over the intersection of the spectral curve with $2D_\infty$. Computing from the normal forms (\ref{nform1}),(\ref{nform2}), we find that this is effected by replacing the sheaf $E_0^*\otimes E_\psi$ by its subsheaf $End(E)_\psi $ in the sequence (\ref{tgt}). One then wants for the tangents to the leaf
the first hypercohomology of the subcomplex of \eqref{tgt}
\begin{equation} \label{tgt2} End(E) \stackrel {[\psi,\cdot]} {\longrightarrow } End(E)_\psi \otimes K_X\, .\end{equation}
Dually, for the cotangent space, one has the first hypercohomology of the cotangent complex
\begin{equation} \label{cotgt2}
End(E)_\psi^0\,\stackrel{[\psi,\cdot]}{\longrightarrow}\, End(E)\otimes K_X\, .
\end{equation}
These are precisely the deformation complexes for our $L$-connection-valued Higgs fields. The Poisson structure on $P$ can be thought of as the inclusion $\SO(-2D_\infty) \longrightarrow \SO$; after pushdown, this gets translated simply into the natural inclusion of (\ref{cotgt2})
 into (\ref{tgt2}). But this is the definition of the Poisson structure for the $L$-connection-valued Higgs fields, and so we are done.
\end{proof}

\section*{Acknowledgements}

We would like to thank the referee for useful comments.
We thank the Institute of Mathematical Sciences at Chennai, where the work began, for 
its hospitality. The first-named author acknowledges the support of the J. C. Bose 
Fellowship. The second was supported by NSERC and the FRQNT.

\end{document}